\definecolor{orcidlogocol}{HTML}{A6CE39}
\tikzset{orcidlogo/.pic={\fill[orcidlogocol]
svg{M256,128c0,70.7-57.3,128-128,128C57.3,256,0,198.7,0,128C0,57.3,57.3,0,128,
    0C198.7,0,256,57.3,256,128z};
\fill[white] svg{M86.3,186.2H70.9V79.1h15.4v48.4V186.2z}
svg{M108.9,79.1h41.6c39.6,0,57,28.3,57,53.6c0,27.5-21.5,53.6-56.8,
53.6h-41.8V79.1zM124.3,172.4h24.5c34.9,0,42.9-26.5,
42.9-39.7c0-21.5-13.7-39.7-43.7-39.7h-23.7V172.4z}
svg{M88.7,56.8c0,5.5-4.5,10.1-10.1,10.1c-5.6,0-10.1-4.6-10.1-10.1c0-5.6,
4.5-10.1,10.1-10.1C84.2,46.7,88.7,51.3,88.7,56.8z};}}
\newcommand\orcidicon[1]{\href{https://orcid.org/#1}{\mbox{\scalerel*{
\begin{tikzpicture}[yscale=-1,transform shape]
\pic{orcidlogo};
\end{tikzpicture}}{|}}}}
\numberwithin{equation}{section}
\theoremstyle{plain}
\newtheorem{theorem}{Theorem}[section]
\theoremstyle{definition}
\newtheorem{assumption}{Assumption}[section]
\newtheorem{definition}{Definition}[section]
\theoremstyle{remark}
\newtheorem{remark}{Remark}[section]
\newcommand{\stkout}[1]{\ifmmode\text{\sout{\ensuremath{#1}}}\else\sout{#1}\fi}
\definecolor{mred}{rgb}{.7,.0,.0}
\definecolor{dred}{rgb}{.6,.0,.0}
\definecolor{mblue}{rgb}{.0,.0,.8}
\definecolor{dblue}{rgb}{.0,.0,.5}
\definecolor{mgreen}{rgb}{.0,.6,.0}
\definecolor{dgreen}{rgb}{.0,.4,.0}
\definecolor{dmagenta}{rgb}{.4,.1,.5}
\crefname{section}{Section}{Sections}
\crefname{subsection}{subsection}{subsections}
\crefname{notation}{Notation}{Notations}
\crefname{hypothesis}{Hypothesis}{Conditions}
\crefname{assumption}{Assumption}{Assumptions}
\Crefname{figure}{Figure}{Figures}
\newcommand{\grad}{\nabla}
\newcommand{\process}[1]{{\{#1_t\}_{t\ge0}}}
\newcommand{\ttup}[1]{\textup{(}#1\textup{)}}
\newcommand{\Ind}{\mathds{1}}           
\newcommand{\transp}{^{\mathsf{T}}}     
\newcommand{\Exp}{{\mathbb{E}}}         
\newcommand{\Prob}{{\mathbb{P}}}        
\newcommand{\RR}{{\mathbb R}}           
\newcommand{\Rd}{{{\mathbb R}^d}}
\newcommand{\NN}{{\mathbb N}}           
\newcommand{\D}{\mathrm{d}}             
\newcommand{\E}{\mathrm{e}}             
\newcommand{\df}{\coloneqq}             
\newcommand{\sB}{{\mathscr{B}}}         
\newcommand{\Act}{\mathbb{U}}           
\newcommand{\Uadm}{\mathfrak{U}}        
\newcommand{\Usm}{\mathfrak{U}_{\mathsf{sm}}}  
\newcommand{\uuptau}{\Hat{\uptau}}
\newcommand{\Sobl}{\mathscr{W}_{\mathrm{loc}}}  
\newcommand{\Equil}{\mathscr{E}}        
\newcommand{\sF}{\mathfrak{F}}          
\newcommand{\cG}{{\mathcal{G}}}        
\newcommand{\cH}{{\mathcal{H}}}         
\newcommand{\cK}{{\mathcal{K}}}         
\newcommand{\Lg}{\mathcal{L}}           
\newcommand{\cP}{{\mathcal{P}}}         
\newcommand{\cS}{\mathcal{S}}           
\newcommand{\cU}{\mathcal{U}}           
\newcommand{\Lyap}{\mathscr{V}}         
\newcommand{\tv}{{\rule[-.45\baselineskip]{0pt}{\baselineskip}\mathsf{TV}}}
\newcommand{\abs}[1]{\lvert#1\rvert}
\newcommand{\norm}[1]{\lVert#1\rVert}
\newcommand{\babs}[1]{\bigl\lvert#1\bigr\rvert}
\newcommand{\bnorm}[1]{\bigl\lVert#1\bigr\rVert}
\DeclareMathOperator{\trace}{Tr}
\newcommand{\ttl}{\Large \bf On the relative value iteration
with a risk-sensitive criterion}
\begin{document}
\title[Risk-sensitive Relative Value Iteration]{\ttl}

\author[Ari Arapostathis]{Ari Arapostathis$^\dag$}
\address{$^\dag$Department of Electrical and Computer Engineering,
The University of Texas at Austin, 2501 Speedway,
EER~7.824,
Austin, TX~78712}
\email{ari@utexas.edu\,\protect\orcidicon{0000-0003-2207-357X}}

\author[Vivek S. Borkar]{Vivek S. Borkar$^\ddag$}
\address{$^\ddag$Department of Electrical Engineering,
Indian Institute of Technology, Powai, Mumbai 400076, India}
\email{borkar@ee.iitb.ac.in\,\protect\orcidicon{0000-0003-0756-5402}}

\begin{abstract}
A multiplicative relative value iteration algorithm for solving the dynamic
programming equation for the risk-sensitive control problem is studied for discrete
time controlled Markov chains with a compact Polish state space,
and controlled diffusions in on the whole Euclidean space.
The main result is a proof of convergence to the desired limit in each case.
\end{abstract}

\subjclass[2000]{90C40, 93E20, 49K40 (60J25, 60J60)}

\keywords{risk-sensitive control, relative value iteration, controlled Markov
process}

\maketitle

\section{Introduction}

Risk-sensitive control problems on an infinite horizon
seek to minimize or maximize a functional defined
as the exponential growth rate of a multiplicative cost, resp.\ reward.
Thus unlike the more classical and commonplace criteria, they lead to a multiplicative
dynamic programming equation, in fact a nonlinear eigenvalue problem for a positive,
positively 1-homogeneous continuous nonlinear operator. This has been extensively
studied for the discrete time discrete state (both finite and countable) and
continuous time and state problems, but the important case of discrete time and
general state space has received relatively less attention in comparison, with
only a small number of contributions such as
\cite{Ananth,DiMasi,AnnaJ}.
The same also holds for the corresponding development of the value iteration algorithm,
which ends up being a multiplicative analog of the algorithm encountered in
average cost problems, alternatively, in its simplest scenario,
a nonlinear counterpart of the power iteration
method for computing the principal eigenvector and eigenvalue of an irreducible
non-negative matrix. This again has been studied in the discrete time and state case
\cite{BorMeyn-02,Cadena1,Cadena2}, but not for the general state space.
In this work we take a first step towards filling in this gap by proposing and analyzing
a multiplicative relative value iteration algorithm for two instances of risk-sensitive
control on a general state space: the discrete time compact Polish state space problem,
and the continuous time controlled diffusion in a Euclidean space.
In the case of controlled diffusions, we would like to cite here the work in
\cite{Ichihara-13,Ichihara-15,Ichihara-19,Kaise-06} which is very much related to
this problem.

\section{Results in Discrete Time}

We consider a controlled Markov chain on a compact Polish space $\cS$ 
with a compact metric action space $U$ and controlled transition kernel
$$(x, u) \in \cS\times U \mapsto p(\D{y} \,|\, x, u) \,=\,
\varphi(y \,|\, x,u)\gamma(\D{y}) \in \cP(\cS)\,,$$
where $\gamma$ is some positive measure on $\cS$ with full support and
$\varphi( \cdot \,|\, \cdot , \cdot ) > 0$ is continuous.
Also given is a `per stage' continuous cost function
$$(x, u) \in \cS\times U \mapsto k(x, u)\,.$$
We shall denote by $X_n, n \ge 0$, and $Z_n$, $n \ge 0$, resp.,
the $\cS$-valued state process and $U$-valued control process.
Thus
$$P(X_{n+1} \in A \,|\, X_m, Z_m, \ m \le n) \,=\,
p(A \,|\, X_n, Z_n) \qquad \forall\, n\in\NN\,,\ \forall\, A \ \mbox{Borel in} \ \cS\,.$$
When $Z_n = v(X_n)$ for all $n$ for some measurable $v \colon \cS \mapsto U$,
we call it a stationary Markov control policy and denote is simply by $v$.
When
$$P(Z_n\in B \,|\, X_m, Z_m, m < n; X_n) \,=\,\phi(B \,|\, X_n)\qquad\forall\,n\in\NN\,,$$
for some $\phi \colon \cS \mapsto \cP(U)$, we call it a randomized Markov control policy
and denote it simply by $\phi$.

The objective is to minimize the asymptotic risk-sensitive cost
\begin{equation*}
\limsup_{n\uparrow\infty}\,
\frac{1}{n}\,
\log \Exp\left[\E^{\sum_{m=0}^{n-1}k(X_m, Z_m)}\right]\,. 
\end{equation*}
The `dynamic programming equation' for this problem ends up being the nonlinear
eigenvalue problem
\begin{equation}\label{DP}
\Lambda V(x) \,=\,\min_{u\in U}\,\left(\E^{k(x,u)}\int_\cS
p(\D{y} \,|\, x,u)V(y)\right)\,,\quad x \in \cS\,. 
\end{equation}
By Theorem 2.2 of \cite{Ananth}, this  has a solution
$V(\cdot) \in C\bigl(\cS; [0,\infty)\bigr)$, $\Lambda \in (0, \infty)$,
where $\Lambda$ is unique, and $V$ is unique up to a multiplicative positive scalar.
Our objective is to propose a recursive scheme to compute these.
Specifically, we consider the `Value Iteration' (VI) algorithm given by
\begin{equation*}
\begin{aligned}
J_{n+1}(x) &\,=\, \frac{\min_{u\in U}\int_\cS p(\D{y} \,|\, x,u)\,
\E^{k(x,u)}J_n(y)}{\Lambda} \\ 
&\,=\, \frac{\int_\cS p(\D{y} \,|\, x,u_n(x))\,\E^{k(x,u)}J_n(y)}{\Lambda}
\end{aligned}
\end{equation*}
for suitable $u_n(\cdot)$ guaranteed by a standard measurable selection theorem \cite{TP}.
 This is not a practicable algorithm since $\Lambda$ is unknown.
 But it will serve a useful purpose in the analysis of the more realistic scheme,
the `Relative Value Iteration' (RVI).
Choose some  $x_0\in\cS$, which is kept fixed.
 The RVI is given by
\begin{equation*}
\begin{aligned}
V_{n+1}(x) &\,=\, \frac{\min_{u\in U}\int_\cS p(\D{y} \,|\, x,u)\,
\E^{k(x,u)}V_n(y)}{V_n(x_0)} \\ 
 &\,=\, \frac{\int_\cS p(\D{y} \,|\, x,u_n''(x))\,\E^{k(x,u)}V_n(y)}{V_n(x_0)}\,,
\end{aligned}
\end{equation*}
for suitable $u_n''(\cdot)$, initiated at $J_0 = V_0 > 0$ so that $V_n, J_n > 0$ 
for all $n$.

We have
\begin{align*}
\max_{x\in\cS}\left(\frac{V_{n+1}(x)}{J_{n+1}(x)}\right)
&\,=\, \max_{x\in\cS}\left(\frac{\min_{u\in U}\int_S
p(\D{y} \,|\, x,u)\,\E^{k(x,u)}J_n(y)\left(\frac{V_n(y)}{J_n(y)}
\right)}{\min_{u\in U}\int_S p(\D{y} \,|\, x,u)\,\E^{k(x,u)}J_n(y)}\right)
\frac{\Lambda}{V_n(x_0)} \\
&\,\le\, \max_{x\in\cS}\left(\frac{V_n(x)}{J_n(x)}\right)\frac{\Lambda}{V_n(x_0)}\,.
\end{align*}
Similarly,
\begin{equation*}
\min_{x\in\cS}\left(\frac{V_{n+1}(x)}{S_{n+1}(x)}\right)
\,\ge\, \min_{x\in\cS}\left(\frac{V_n(x)}{J_n(x)}\right)\frac{\Lambda}{V_n(x_0)}\,.
\end{equation*}
Therefore
\begin{equation*}
1 \,\le\, \frac{\max_{x\in\cS}\,\Bigl(\frac{V_{n+1}(x)}{J_{n+1}(x)}\Bigr)}
{\min_{x\in\cS}\,\Bigl(\frac{V_{n+1}(x)}{J_{n+1}(x)}\Bigr)} \,\le\, 
\frac{\max_{x\in\cS}\,\Bigl(\frac{V_{n}(x)}{J_{n}(x)}\Bigr)}
{\min_{x\in\cS}\,\Bigl(\frac{V_{n}(x)}{J_{n}(x)}\Bigr)}
\,\le\, \dotsb \,\le\, 1\,,
\end{equation*}
implying that equality must hold throughout, that is, $V_n(x) = C_nJ_n(x)$
for some constant $C_n$ independent  of $x$. 
We can then show inductively that
\begin{equation*}
C_n \,\df\, \frac{V_{n}(x)}{J_{n}(x)} \,=\,
\prod_{m=0}^{n-1}\frac{\Lambda}{V_m(x_0)}\,.
\end{equation*}
Furthermore,
\begin{equation*}
\frac{V_{n+1}(x_0)}{J_{n+1}(x_0)} \,=\,
\frac{V_{n}(x_0)}{J_{n}(x_0)}\frac{\Lambda}{V_n(x_0)} \,=\,\frac{\Lambda}{J_n(x_0)}\,.
\end{equation*}
We say that the VI (RVI) converges if the sequence of
functions $\{J_n\}_{n\in\NN}$ ($\{V_n\}_{n\in\NN}$) converges pointwise.
If the VI converges, in particular $J_n(x_0)$ does, and by the above equations,
the RVI will also converge. Thus we only need to establish the convergence of the VI.

Let $V(\cdot)$ and $\Lambda$ be as in \cref{DP}.
Let $v^{*}(\cdot)$ denote a measurable minimizer of the right hand side of \cref{DP}.
This is always possible by a measurable selection theorem \cite{TP}. Define
$$p^*(\D{y} \,|\, x) \,\df\,
\bigl(\Lambda V^*(x)\bigr)^{-1}
p\bigl(\D{y} \,|\, x,v^*(x)\bigr)\,\E^{k(x,v^*(x))}V^*(y)\,.$$ 
Then we have,
$$\frac{J_{n+1}(x)}{V^*(x)} \,\le\, \int_\cS p^*(\D{y} \,|\, x)
\left(\frac{J_n(y)}{V^*(y)}\right)\,.$$
Let $\{X^*_n\}$ denote the stationary
chain governed by $p^*(\cdot  \,|\,  \cdot)$.
Let $Y_n\df X^*_{-n}$, for $n\in\NN$.
It then follows that
$$\frac{J_n(Y_n)}{V^*(Y_n)}\,,\quad n < 0\,,$$
is a  reverse submartingale that converges a.s.\ and in $L_1(\nu)$ \cite{Neveu}
to a random variable $\zeta$ (say).
For any open $O \subset \cS$, the martingale law of large numbers \cite{Neveu} yields
$$\lim_{n\uparrow\infty}\,\frac{1}{n}\,\sum_{m=0}^{n-1}\Bigl(I\{X^*_{m+1} \in O\}
- p^*\bigl(O \,|\,  X^*_m, v^*(X^*_m)\bigr)\Bigr) \,=\,0 \text{\ \ a.s.}$$
Under our assumptions, 
\begin{equation}\label{uniformbound}
\min_{x,u}\, p^*(O \,|\, x,u) \,>\, \delta \gamma (O) \,>\, 0 
\end{equation}
 for some $\delta > 0$.
 Thus
\begin{equation*}
\liminf_{n\uparrow\infty}\, \frac{1}{n}\, \sum_{m=0}^{n-1}\Ind\{X^*_n \in O\}
\,\ge\, \delta \gamma(O)\text{\ \ a.s.},
\end{equation*}
implying $X^*_n \in O$ i.o., a.s.
Fix $\eta > 0$ and let $O$ be an open $\epsilon$-ball centered at $x$ for a
prescribed $\epsilon > 0$, chosen such that
\begin{equation*}
y \in O \Longrightarrow \abs{V(y) - V(x)} \,<\, \eta\,.
\end{equation*}
Pick a zero probability set $\mathcal{N}$ outside which all `a.s.' results above hold for
$\epsilon = \frac{1}{m}$, $\eta = \frac{1}{k}$, and $m, k \ge 1$.
Fix $x \in \cS$. Fix a sample point $\omega \notin \mathcal{N}$.
Take (possibly random) $n_0 \ge 1$ such that (say)
\begin{equation*}
n \ge n_0 \,\Longrightarrow \,\abs{J_n(X^*_n) - \zeta V(X^*_n)} \,<\, \eta
\,=\, \frac{1}{k}\,.
\end{equation*}
 Then on $\{X^*_n \in O\}$ with $\epsilon = \frac{1}{m}$ (say), we have
\begin{equation*}
\begin{aligned}
\babs{J_n(X^*_n) - \zeta V^*(x)} &\,\le\,
\babs{J_n(X^*_n) - \zeta V^*(X^*_n)}
+ \babs{\zeta (V^*(X^*_n) - V^*(x))}\\
&\,\le\, (\zeta + 1)\frac{1}{k}\,.
\end{aligned}
\end{equation*}
Considering $k, m \uparrow \infty$, it follows that if $X^*_n \to x$ along a subsequence,
 then $J_n(X^*_n) \to \zeta V^*(x)$ along that subsequence.
 By \cref{uniformbound}, it also follows that $J_n(x) \to \zeta V^*(x)$ for
 $\gamma$-a.s.\ $x$. It then follows that $V_n(x) \to$ some $\Bar{V}(x)$ $\gamma$-a.s.
 But then, passing to the limit in the defining equation for RVI, $\Bar{V}$
 satisfies  \cref{DP} with $\Bar{V}(x_0)=\Lambda$, which uniquely specifies it.

\section{Results in Continuous Time}

In this section we consider the risk-sensitive control problem for
a controlled diffusion on $\Rd$ taking the form
\begin{equation}\label{E-sde}
\D X_t \,=\, b(X_t,U_t)\,\D t + \upsigma (X_t)\,\D W_t\,.
\end{equation}
All random processes in \cref{E-sde} live in a complete
probability space $(\Omega,\sF,\Prob)$.
The process $W$ is a $d$-dimensional standard Wiener process independent
of the initial condition $X_{0}$, and
the control process $\{U_t\}_{t\ge0}$ lives in a compact metrizable space $\Act$.
The sets of admissible controls $\Uadm$, and stationary
Markov controls $\Usm$ are defined in the standard manner.

We let $a\df \upsigma\upsigma\transp$,
and denote by $B_R$ the open ball of radius $R$ in $\Rd$
centered at $0$.
We impose the following set assumptions on the coefficients,
and the running cost $c\colon\Rd\times\Act\to\RR$.

\begin{assumption}\label{A3.1}
The following hold.
\begin{enumerate}
\item[(i)]
The drift $b\colon\RR^{d}\times\Act\to\RR^{d}$ and running
cost $c$ are continuous,
and for some positive constants $C_R$ depending on $R>0$, and $C_0$, we have
\begin{gather*}
\abs{c(x,u) - c(y,u)} +
\abs{b(x,u) - b(y,u)} + \norm{\upsigma(x) - \upsigma(y)} \,\le\,C_{R}\,\abs{x-y}
\intertext{for all $x,y\in B_R$ and $u\in\Act$, and}
\sum_{i,j=1}^{d} a^{ij}(x)\zeta_{i}\zeta_{j}
\,\ge\, C^{-1}_0 \abs{\zeta}^{2}
\quad\forall\, (x,\zeta)\in \Rd\times\Rd\,,
\end{gather*}
where $\norm{\upsigma}\df\bigl(\trace\, \upsigma\upsigma\transp\bigr)^{\nicefrac{1}{2}}$
denotes the Hilbert--Schmidt norm of the matrix $\upsigma$.

\smallskip
\item[(ii)]
The function $a\colon \Rd\to\RR^{d\times d}$ is bounded,
and for some $\theta\in[0,1)$
and a constant $\kappa_0$, we have
\begin{equation}\label{EA3.1B}
\abs{b(x,u)} \,\le\, \kappa_0\bigl(1+\abs{x}^\theta\bigr)\,,\quad\text{and\ \ }
\abs{c(x,u)} \,\le\, \kappa_0\bigl(1+\abs{x}^{2\theta}\bigr)
\end{equation}
for all $(x,u)\in\Rd\times\Act$.
In addition,
\begin{equation}\label{EA3.1C}
\min_{x\in B_R}\,\min_{u\in\Act}\,c(x,u)
\;\xrightarrow[R\to\infty]{}\;\infty\,,
\end{equation}
and
\begin{equation}\label{EA3.1D}
\max_{x\in B_R}\,\frac{1}{\abs{x}^{1-\theta}}\;
\max_{u\in\Act}\;\bigl\langle b(x,u),\, x\bigr\rangle^{+}
\;\xrightarrow[R\to\infty]{}\;0\,.
\end{equation}
\end{enumerate}
\end{assumption}

\begin{definition}
For $U\in\Uadm$ we define \emph{the risk-sensitive value} under
a control $U\in\Usm$, by
\begin{equation}\label{E-valueU}
\Lambda^x_U\,=\, \Lambda^x_U(c)\,\df\, \limsup_{T\to\infty}\,\frac{1}{T}\,
\log\Exp^x_U\Bigl[\E^{\int_{0}^{T} c(X_{t},U_t)\,\D{t}}\Bigr]\,,
\end{equation}
and the \emph{risk-sensitive optimal values} by
\begin{equation}\label{E-value}
\Lambda^x_* \,\df\, \inf_{U\in\,\Uadm}\,\Lambda^x_U\,,\quad\text{and}\quad
\Lambda_* \,\df\, \inf_{x\in\,\Rd}\,\Lambda^x_*\,.
\end{equation}
Also let
\begin{equation*}
\cG f(x) \,\df\, \frac{1}{2}\trace\left(a(x)\nabla^{2}f(x)\right)
+ \min_{u\in\Act}\, \bigl[\bigl\langle b(x,u),
\grad f(x)\bigr\rangle + c(x,u) f(x)\bigr]\,,\quad f\in C^2(\Rd)\,,
\end{equation*}
and
\begin{equation}\label{E-lamstr}
\lambda_*\,=\,\lambda_*(c)\,\df\,\inf\,\Bigl\{\lambda\in\RR\,
\colon \exists\, \phi\in\Sobl^{2,d}(\Rd),\ \phi>0, \
\cG\phi -\lambda\phi\le 0 \text{\ a.e.\ in\ } \Rd\Bigr\}\,.
\end{equation}
\end{definition}

Some discussion is in order here.
The quantity $\lambda_*$ is the generalized principal eigenvalue of the
semilinear operator $\cG$ in $\Rd$.
We assume that $\lambda_*<\infty$.
Note that in specific problems, this is verified via a Foster--Lyapunov
equation of the form
\begin{equation*}
\frac{1}{2}\trace\left(a(x)\nabla^{2}\Lyap(x)\right)
+ \bigl\langle b_v(x),\grad \Lyap(x)\bigr\rangle
+ c_v(x) \Lyap(x)\,\le\, \kappa_0 -\kappa_1 \Lyap(x)
\end{equation*}
for some positive function $\Lyap\in C^2(\Rd)$ which is bounded
away from $0$, and for some $v\in\Usm$ and constants $\kappa_0$
and $\kappa_1$.
In this equation we used the convenient notation
\begin{equation*}
b_v(x) \,\df\, b\bigl(x,v(x)\bigr)\,,\quad\text{and\ }
c_v(x) \,\df\, c\bigl(x,v(x)\bigr)\qquad\text{for\ }v\in\Usm\,,
\end{equation*}
which we adopt for the rest of the paper.

\subsection{The risk-sensitive HJB}\label{S3.1}

As shown in \cite[Lemmas~2.2 and 2.3]{ABis-18}, there exists a positive
eigenfunction $\Psi\in C^2(\Rd)$ which solves
\begin{equation}\label{E-eigen}
\cG \Psi(x) \,=\, \lambda_* \Psi(x)\,,\quad x\in\Rd\,,
\end{equation}
and $\lambda_*\le \Lambda^x_*$ for all $x\in\Rd$.
We let $\Usm^*$ denote the controls $v\in\Usm$ which satisfy
\begin{equation*}
\bigl\langle b_v(x),\grad \Psi(x)\bigr\rangle + c_v(x) \Psi(x)
\,=\, \min_{u\in\Act}\bigl[
\bigl\langle b(x,u),\grad \Psi(x)\bigr\rangle + c(x,u) \Psi(x)\bigr]
\quad \text{a.e.\ }x\in\Rd\,.
\end{equation*}
In other words, $\Usm^*$ is the set of measurable selectors from
the minimizer of \cref{E-eigen}.

A variation of \cite[Lemma~3.2]{ABis-18}, using
\cref{EA3.1D}, shows that
\begin{equation}\label{E-vanish}
\limsup_{t\to\infty}\, \frac{1}{t}\,\Exp^x_U\bigl[\abs{X_{t}}^{1+\theta}\bigr]
\, =\, 0 \qquad\forall\,U\in\Uadm\,.
\end{equation}
Indeed, using the function $\abs{x}^{2(1+\theta)}$ in equation (3.1) of
\cite{ABis-18} following the rest of the proof of \cite[Lemma~3.2]{ABis-18},
we obtain \cref{E-vanish}.
On the other hand, \cite[Lemma~4.1]{ABBK-19} shows that
\cref{EA3.1B,EA3.1D} imply that there exists
a constant $\widetilde{C}_0>0$ such that any positive solution
$\phi\in\Sobl^{2,d}(\Rd)$ of
\begin{equation*}
\frac{1}{2}\trace\left(a(x)\nabla^{2}\phi(x)\right)
+ \bigl\langle b_v(x),
\grad \phi(x)\bigr\rangle + c_v(x) \phi(x)\,=\,\lambda\phi(x)
\end{equation*}
for $v\in\Usm$, satisfies
\begin{equation}\label{E-gradient}
\frac{\abs{\grad\phi(x)}}{\phi(x)} \,\le\,
\widetilde{C}_0 (1+\abs{x}^{\theta})
\end{equation}
Therefore, by \cref{E-gradient},
the eigenfunction $\Psi$ in \cref{E-eigen} satisfies
\begin{equation}\label{E-growth1}
\E^{-C(1+\abs{x}^{1+\theta})}\,\le\, \Psi(x) \,\le\,
\E^{C(1+\abs{x}^{1+\theta})}\qquad\forall\,x\in\Rd\,,
\end{equation}
for some constant $C>0$.
An application of Fatou's lemma on the stochastic representation of
the solution $\Psi$ of \cref{E-eigen} shows that
\begin{equation}\label{E-Fatou1}
\Psi(x) \,\ge\, \Exp^{x}_{v^*}
\Bigl[\E^{\int_{0}^{T} [c_{v^*}(X_{t})-\lambda_*]\,\D{t}}\,
\Psi(X_{T})\Bigr] \qquad \forall\,T>0\,,
\end{equation}
with $v^*\in\Usm^*$.
Taking logarithms on both sides of \cref{E-Fatou1},
applying Jensen's inequality, and
dividing by $T$, we obtain
\begin{equation}\label{E-Jensen1}
\frac{1}{T}\;\Exp^{x}_{v^*}\biggl[\int_{0}^{T}
c_{v^*}(X_{t})\,\D{t}\biggr]
+\frac{1}{T}\;\Exp^{x}_{v^*}\bigl[\log{\Psi(X_{T})}\bigr] \,\le\,
\lambda_* + \frac{1}{T} \log \Psi(x)\,.
\end{equation}
Using \cref{E-vanish,E-growth1} and taking limits as $T\to\infty$ in \cref{E-Jensen1},
 we obtain
\begin{equation*}
\limsup_{T\to\infty}\;
\frac{1}{T}\;\Exp^{x}_{v^*}\biggl[\int_{0}^{T}
c_{v^*}(X_{t})\,\D{t}\biggr]\,\le\,\lambda_*\,.
\end{equation*}
This together with \cref{EA3.1C} implies that the diffusion in \cref{E-sde}
controlled by $v^*\in\Usm^*$ has an invariant probability measure,
and, therefore, it is positive recurrent \cite[Theorem~3.3]{Hasm-60}
(see also \cite{Bhattacharya-78}).
An application of \cite[Lemma~2.1]{ABis-18} then shows that $\Psi$ is inf-compact,
which in turn implies that
$\Lambda^x_{v^*}\le\lambda_*$ for all $x\in\Rd$,
by \cite[Lemma~2.1\,(d) and (f)]{ABis-18}.
Since we have already asserted the converse inequality, this shows
that
\begin{equation*}
\Lambda^x_*\,=\,\Lambda_*\,=\,\lambda_*\qquad\forall\,x\in\Rd\,,
\end{equation*}
or in other words, the optimal risk-sensitive
value is equal to the generalized principal eigenvalue defined in \cref{E-lamstr}.
Note also that the inf-compactness of
$\Psi$ implies by \cref{E-eigen} that the diffusion in \cref{E-sde}
controlled under $v^*\in\Usm^*$ is exponentially ergodic, or in other words,
the transition probability of the process $\process{X}$ in \cref{E-sde} under
the control $v^*$,
converges to its invariant probability measure in total variation at
an exponential rate \cite{MT-III-93}.

Uniqueness of the eigenfunction $\Psi$, which we refer to as the \emph{ground state},
is related to the ergodic properties of the \emph{ground state diffusion},
which takes the form
\begin{equation}\label{E-sde*}
\D{X}^*_{t} \,=\, \bigl(b(X^*_t,U_t) + a (X^*_t)
\grad\psi(X^*_t)\bigr)\,\D{t} + \upsigma(X^*_t)\,\D W^*_{t}\,,
\end{equation}
with $\psi\df\log\Psi$.
First, we have equality in \cref{E-Fatou1} if and only if
\cref{E-sde*} controlled under $U_t=v^*(X^*_t)$ is regular.
This is shown in \cite[Lemma~2.3 and Corollary~2.2]{ABS-19}.
Note that \cref{E-eigen} can be written in the form 
\begin{equation}\label{E-cG*}
\begin{aligned}
\cG^* \psi(x)&\,\df\,
\frac{1}{2}\trace\left(a(x)\nabla^{2}\psi(x)\right)\\
&\mspace{50mu}
+ \min_{u\in\Act}\,\Bigl[\bigl\langle b(x,u)+ \tfrac{1}{2} a(x)\grad\psi(x),
\grad \psi(x)\bigr\rangle + c(x,u) \psi(x)\Bigr]\,=\,\lambda\psi(x)\,.
\end{aligned}
\end{equation}
Naturally, the sets of measurable selectors from the minimizers of
\cref{E-eigen} and \cref{E-cG*} are equal.
By \cref{E-gradient}, the hypothesis that $a$ is bounded,
and the growth assumptions of the drift in \cref{EA3.1B}, it follows
that \cref{E-sde*} is regular for any $U\in\Uadm$.
Thus, mimicking the proof of \cite[Lemma~2.3]{ABS-19} we obtain
\begin{equation}\label{E-crucial}
\Psi(x) \,\le\, \Exp^{\mathstrut x}_{ U}
\Bigl[\E^{\int_{0}^{T} [c(X_{t},U_t)-\lambda_*]\,\D{t}}\,
\Psi(X_{T})\Bigr] \qquad \forall\,T>0\,,
\end{equation}
with equality when $U_t=v^*(X_t)$ for any $v^*\in\Usm^*$.

We review one important property of the generalized principal eigenvalue
which concerns its dependence on the running cost $c$.
Let
\begin{equation}\label{E-Lg}
\Lg_u f(x) \,\df\,
\frac{1}{2}\trace\left(a(x)\nabla^{2}f(x)\right) +
\bigl\langle b(x,u),\grad f(x)\bigr\rangle\,,\qquad u\in\Act\,,
\end{equation}
and $\Lg_v$ for $v\in\Usm$, denote the operator defined as
above, but with $b(x,u)$ replaced by $b_v(x)$.
For $v\in\Usm$ let
\begin{equation}\label{E-lamstrv}
\lambda_v(c)\,\df\,\inf\,\Bigl\{\lambda\in\RR\,
\colon \exists\, \phi\in\Sobl^{2,d}(\Rd),\ \phi>0, \
\Lg_{v}\phi +c_{v}\phi-\lambda\phi\le 0 \text{\ a.e.\ in\ } \Rd\Bigr\}\,.
\end{equation}
Naturally, we have $\lambda_{v^*}(c)=\lambda_*$ for all $v^*\in\Usm^*$.
Let $C_{\mathrm{o}}^+(\Rd)$ denote the collection of all non-trivial,
nonnegative, continuous
functions which vanish at infinity.
We say that $\lambda_v$ is
\emph{strictly monotone at $c$ on the right}
if $\lambda_v(c+h)>\lambda_v(c)$ for all $h\in C_{\mathrm{o}}^+(\Rd)$.
We can of course define the analogous property for $\lambda_*$, independently
of the control $v^*\in\Usm^*$, using the definition in \cref{E-lamstr}.
Since $\Usm^*$ is the set of measurable selectors from the minimizer,
it is clear that these two properties are equivalent.

Let $\uuptau(A)$ denote the first hitting time of the set $A$.
By \cite[Lemma~2.7, Corollary~2.3, and Theorem~2.3]{ABS-19}, together
with the equivalence of strict monotonicity on the right of
$\lambda_*$ and $\lambda_{v^*}$ for $v^*\in\Usm^*$, we can assert that the following
statements are equivalent.
\begin{enumerate}
\item
The eigenvalue $\lambda_*$ is simple.
\smallskip\item
It holds that
\begin{equation}\label{E-strep}
\Psi(x)\,=\,\Exp^x_{v^*}
\Bigl[\E^{\int_0^{\uuptau(\sB)}[c_{v^*}(X_s)-\lambda_*]\, \D{s}}\,\Psi(X_{\uuptau(\sB)})
\,\Ind_{\{\uuptau(\sB)<\infty\}}\Bigr]\,,
\quad \forall\,x\in \Bar\sB^c\,,
\end{equation}
for any open ball $\sB$ and $v^*\in\Usm^*$.
\smallskip\item
The ground state process in \cref{E-sde*} controlled under
any $v^*\in\Usm^*$ is recurrent.
\end{enumerate}

We summarize the above discussion in the following theorem which is
a slight variation of \cite[Proposition~5.1]{ABBK-19}.

\begin{theorem}\label{T3.1}
Grant \cref{A3.1}, and suppose that $\lambda_*$ is finite.
Then the HJB equation
\begin{equation}\label{T3.1A}
\min_{u\in\Act}\;
\bigl[\Lg_u \Psi(x) + c(x,u)\,\Psi(x)\bigr] \,=\,  \lambda_*\,\Psi(x)
\qquad\forall\,x\in\Rd
\end{equation}
has a solution $\Psi\in C^2(\RR^{d})$,
satisfying $\inf_{\Rd}\,\Psi>0$, and the following hold:
\begin{enumerate}
\item[\ttup a]
$\Lambda^x_*=\Lambda_*=\lambda_*$ for all $x\in\Rd$.

\smallskip
\item[\ttup b]
Any $v^{\mathstrut*}\in\Usm^*$ renders the SDE in \cref{E-sde} exponentially ergodic
 and is optimal, that is, $\Lambda^x_{v^*}=\Lambda_*$ for all $x\in\Rd$.

\smallskip
\item[\ttup c]
It holds that
\begin{equation*}
\Psi(x) \,=\, \Exp^x_{v^*}\Bigl[\E^{\int_{0}^{T}
[c(X_{t},v(X_{t}))-\lambda_*]\,\D{t}}\,\Psi(X_T)\Bigr]
\qquad\forall\, (T,x)\in\RR_+\times\Rd\,,
\end{equation*}
for any $v\in\Usm^*$, and, in addition, \cref{E-crucial} holds.

\smallskip
\item[\ttup d]
The function $\psi=\log\Psi$ satisfies $\abs{\grad\psi} \le \widetilde{C}_0
(1+\abs{x})$ for some constant $\widetilde{C}_0$.

\smallskip
\item[\ttup e]
If $\lambda_*$ is strictly monotone at $c$ on the right,
then there exists a unique (up to a positive multiplicative constant) positive
solution to \cref{T3.1A} (ground state),
and any optimal $v\in\Usm$ lies in $\Usm^*$.
In addition, the ground state $\Psi$ satisfies \cref{E-strep},
and \cref{E-sde*} controlled under $U_t=v^*(X^*_t)$ with $v^*\in\Usm^*$
is recurrent.
\end{enumerate}
\end{theorem}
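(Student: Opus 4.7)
The plan is to assemble \cref{T3.1} by piecing together the machinery already laid out in the preceding discussion, since essentially every ingredient has been reviewed; the proof is therefore mostly a matter of bookkeeping and citing results from \cite{ABis-18,ABS-19,ABBK-19}. Existence of a positive $C^2$ eigenfunction $\Psi$ satisfying \cref{T3.1A} is identical (after expanding the definition of $\cG$) to \cref{E-eigen}, and so follows from \cite[Lemmas~2.2 and 2.3]{ABis-18}. The property $\inf_{\Rd}\Psi>0$ is then a consequence of the inf-compactness of $\Psi$, which in turn is secured via \cite[Lemma~2.1]{ABis-18} once positive recurrence of the diffusion under any $v^*\in\Usm^*$ is established.

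For part (a), the plan is to combine the inequality $\lambda_*\le\Lambda^x_*$ (noted directly after \cref{E-eigen}) with the reverse estimate $\Lambda^x_{v^*}\le\lambda_*$ for $v^*\in\Usm^*$. The latter is obtained as sketched: Fatou's lemma on the stochastic representation yields \cref{E-Fatou1}; Jensen's inequality and division by $T$ give \cref{E-Jensen1}; passing to the limit using \cref{E-vanish,E-growth1} produces a bound on the asymptotic average cost; \cref{EA3.1C} then forces existence of an invariant probability measure via \cite[Theorem~3.3]{Hasm-60}, and \cite[Lemma~2.1\,(d),(f)]{ABis-18} concludes. Part (b) follows immediately: optimality is a consequence of (a), and exponential ergodicity is obtained from inf-compactness of $\Psi$ combined with \cite{MT-III-93}.

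For part (c), equality in the representation under $v^*$ is precisely the equality case of the Fatou inequality \cref{E-Fatou1}, which by \cite[Lemma~2.3 and Corollary~2.2]{ABS-19} is equivalent to regularity of the ground state diffusion \cref{E-sde*} driven by $v^*$. To obtain \cref{E-crucial} for arbitrary $U\in\Uadm$, regularity of \cref{E-sde*} under any admissible control is needed; this is where the gradient estimate \cref{E-gradient}, boundedness of $a$, and the sublinear drift bound \cref{EA3.1B} come in, and the argument then mimics \cite[Lemma~2.3]{ABS-19}. Part (d) is immediate: write $\grad\psi=\grad\Psi/\Psi$, invoke \cref{E-gradient}, and absorb the $\theta<1$ power into $1+\abs{x}$.

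For part (e), I will invoke the three-way equivalence summarized in the bulleted list preceding the theorem, which is proved in \cite[Lemma~2.7, Corollary~2.3, Theorem~2.3]{ABS-19}; strict monotonicity on the right of $\lambda_*$ forces simplicity of the eigenvalue, hence uniqueness of $\Psi$ up to a positive scalar, together with \cref{E-strep} and recurrence of the ground state diffusion \cref{E-sde*} under any $v^*\in\Usm^*$. That any optimal $v\in\Usm$ lies in $\Usm^*$ is obtained by comparing the stochastic representation for $\Psi$ under $v$ (with equality forced by optimality) with \cref{E-crucial}, from which the selector property follows a.e. The main delicate point—not mere bookkeeping—is the translation between the analytically defined strict monotonicity of $\lambda_*$ in \cref{E-lamstr} and that of $\lambda_{v^*}(c)$ from \cref{E-lamstrv} for every $v^*\in\Usm^*$; this is ensured by the fact that $\Usm^*$ consists exactly of the measurable selectors attaining the minimum, so that the perturbation $c\mapsto c+h$ with $h\in C^+_{\mathrm{o}}(\Rd)$ transfers between the two formulations and their monotonicity properties coincide.
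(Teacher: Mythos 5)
Your proposal follows essentially the same route as the paper: the theorem is presented there as a summary of the Section~\ref{S3.1} discussion (a variation of \cite[Proposition~5.1]{ABBK-19}), and your assembly---existence via \cite[Lemmas~2.2--2.3]{ABis-18}, the Fatou/Jensen argument with \cref{E-vanish,E-growth1} plus \cref{EA3.1C} and \cite{Hasm-60} for part (a), inf-compactness and \cite{MT-III-93} for (b), regularity of \cref{E-sde*} via \cref{E-gradient,EA3.1B} and mimicking \cite[Lemma~2.3]{ABS-19} for (c), \cref{E-gradient} with $\theta<1$ for (d), and the three-way equivalence plus the equivalence of strict monotonicity of $\lambda_*$ and $\lambda_{v^*}$ for (e)---matches the paper's reasoning step for step. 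No gaps of substance beyond the level of detail the paper itself delegates to the cited references.
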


There is another important property that we need in the study of
convergence of the value iteration, which we explain next.
Let $v\in\Usm$.
We say that $\lambda_v(c)$, defined in
\cref{E-lamstrv}, is \emph{strictly monotone at $c$} if
$\lambda_v(c-h)<\lambda_v(c)$ for some $h\in C_{\mathrm{o}}^+(\Rd)$.
Of course, strict monotonicity implies strict monotonicity on the right
as can be seen from the fact that $c\mapsto\lambda_v(c)$ is convex.
By \cite[Theorem~2.1]{ABS-19} strict monotonicity of
$\lambda_{v^*}(c)$ at $c$ is equivalent to the statement
that the ground state diffusion in \cref{E-sde*} controlled under $v^*$ is
positive recurrent.

\subsection{The value iteration}

Let
\begin{equation*}
C^2_{\Psi,+}(\Rd)\,\df\, \bigl\{g\in C^2(\Rd)\,\colon\,
g>0\,,~ \norm{g}_{\mathstrut\Psi} <\infty\bigr\}\,.
\end{equation*}
We introduce the equation
\begin{equation}\label{E-VI}
\partial_t\,\overline\Phi(t,x)  \,=\, \min_{u\in\Act}\;
\bigl[\Lg_u \overline\Phi(t,x) + c(x,u)\,\overline\Phi(t,x)\bigr]
- \lambda_*\,\overline\Phi(t,x)\,,\quad t>0\,,
\end{equation}
with $\overline\Phi(0,x)=\Phi_0(x)$, $\Phi_0\in C^2_{\Psi,+}(\Rd)$.

\begin{definition}\label{D3.1}
Let $\{\Hat{v}_t\}_{t\ge0}$ be an a.e.\ measurable selector from
the minimizer of \cref{E-VI}.
We define
the corresponding (nonstationary) Markov control
\begin{equation*}
\Hat{v}^{t}\df\bigl\{\Hat{v}^{t}_{s}\,=\,\Hat{v}_{t-s}(x)\,,\; s\in[0,t]\bigr\}\,.
\end{equation*}
and denote the set of these controls by $\widehat\cU(\Phi_0)$, including
explicitly the dependence on the initial condition $\Phi_0$ in the notation.
\end{definition}

We don't care so much about uniqueness of solutions to \cref{E-VI};
however, see \cite[Theorems~3.12--3.13]{RVIM}.
We work with the solution $\overline\Phi(t,x)$ which satisfies
\begin{equation*}
\begin{aligned}
\overline\Phi(t,x) &\,=\, \inf_{U\in\Uadm}\,
\Exp^{\mathstrut x}_U \Bigl[\E^{\int_{0}^{t}
[c(X_{s},U_s)-\lambda_*]\,\D{s}}\, \Phi_0(X_{t})\Bigr]\\
&\,=\,
\Exp^{x}_{\Hat{v}^{t}}\Bigl[\E^{\int_{0}^{t}
[c(X_{s},\Hat{v}^{t}_s(X_s))-\lambda_*]\,\D{s}}\, \Phi_0(X_{t})\Bigr]
\qquad\forall\, \{\Hat{v}^{t}\}_{t\ge0}\in \widehat\cU(\Phi_0)\,.
\end{aligned}
\end{equation*}
Note that for any element of  $\widehat\cU(\Phi_0)$ we have
$\Hat{v}^{t+\tau}_{s+\tau}=\Hat{v}^{t}_{s}$
for all $t\ge s\ge0$ and $\tau\ge0$.
Also, by \cref{E-crucial}, we obtain
\begin{equation*}
\Psi(x) \,\le\, \Exp^{x}_{\Hat{v}^{t}}\Bigl[\E^{\int_{0}^{t}
[c(X_{s},\Hat{v}^{t}_s(X_s))-\lambda_*]\,\D{s}}\, \Psi(X_{t})\Bigr]
\qquad\forall\, \{\Hat{v}^{t}\}_{t\ge0}\in \widehat\cU(\Phi_0)\,.
\end{equation*}

Incorporating explicitly the dependence
on the initial condition
$\Phi_0$ in the notation,
we let $\cS_t[\Phi_0](x)$, $t\ge0$, denote the solution of \cref{E-VI}.
It is clear that $\cS_t[\Psi]=\Psi$ for all $t\ge0$ by \cref{T3.1}\,(c),
and that the uniqueness of the ground state in \cref{T3.1}\,(e) implies
that any positive initial condition $\Phi_0$ satisfying
$\cS_t[\Phi_0]=\Phi_0$ for all $t\ge0$ must equal the ground state $\Psi$
up to a positive multiplicative constant.

Let $\Equil$ denote the set of equilibria of the semiflow $\cS_t$,
or equivalently,
the set of solutions of the HJB in \cref{T3.1A}, that is,
\begin{equation*}
\Equil \,\df\, \{r\Psi\colon\, r>0\}\,.
\end{equation*}
By $C_\Psi(\Rd)$ we denote the class of continuous functions
$\phi$ satisfying
\begin{equation*}
\norm{\phi}_{\mathstrut\Psi} \,\df\, \sup_{x\in\Rd}\,\frac{\abs{\phi(x)}}{\Psi(x)}
\,<\,\infty\,.
\end{equation*}
For $\kappa>0$ we define the set $\cH_{\kappa}\subset C^2(\RR^{d})$ by
\begin{equation}\label{E-cH}
\cH_{\kappa} \df\bigl\{h\in C^2(\RR^{d})\,\colon\,
h\,\ge\, \kappa^{-1}\Psi\,,~ \norm{h}_{\mathstrut\Psi}<\kappa\bigr\}\,.
\end{equation}
We have
\begin{equation}\label{E-inv}
\begin{aligned}
\kappa^{-1}\Psi(x) &\,=\,  \cS_t[\kappa^{-1}\Psi](x)\\
&\,\le\, \cS_t[\Phi_0](x)\\
&\,\le\, \cS_t\bigl[\norm{\Phi_0}_{\mathstrut\Psi}\Psi\bigr](x)\\
&\,\le\, \cS_t\bigl[\kappa\Psi\bigr](x)
\,=\, \kappa \Psi(x)
\qquad\forall\,\Phi_0\in\cH_\kappa\,,
\end{aligned}
\end{equation}
where the first and the last equalities follow by \cref{T3.1}\,(c), and the
inequalities by the monotonicity of $f\mapsto \cS_t[f]$ and
the definition of $\cH_{\kappa}$.
It follows from \cref{E-inv} that if $\Phi_0\in\cH_\kappa$ then
$\cS_t[\Phi_0]\in\cH_\kappa$ for all $t\ge0$.
So the set $\cH_\kappa$ is positively invariant under the semiflow $\cS_t$.

Recall the definition of $\Lg$ in \cref{E-Lg}, and let
\begin{equation}\label{E-TLg}
\widetilde\Lg_u \,\df\,  \Lg_u +
\bigl\langle \grad \psi(x), a(x)\grad\bigr\rangle\,,\qquad u\in\Act\,.
\end{equation}
This definition can be extended to $\widetilde\Lg_v$ for any Markov control
$v$ (not necessarily stationary) by replacing $u\in\Act$ with $v$
in \cref{E-TLg}.
Clearly then $\Lg_v$, with $v\in\Usm$, is the extended generator
of \cref{E-sde*} controlled by $v$.
The operator $\widetilde\Lg_u$ satisfies a very important identity.
If $\Phi\in C^2(\Rd)$ is a positive function then
\begin{equation}\label{E-identity}
\widetilde\Lg_{u}\Bigl(\frac{\Phi}{\Psi}\Bigr)
\,=\, \biggl(\frac{\Lg_{u} \Phi}{\Phi} -\frac{\Lg_{u} \Psi}{\Psi}\biggr)
\frac{\Phi}{\Psi}
\qquad\forall\,u\in\Act\,.
\end{equation}

In the sequel we work under the following hypothesis.

\begin{itemize}
\item[\hypertarget{H1}{\sf{(H1)}}]
{\slshape The ground state diffusion in \cref{E-sde*} is positive recurrent
under some $v^*\in\Usm^*$.
We let $\Tilde\mu_*$ denote its invariant probability measure, and
$\widetilde\Exp^x_*$ expectation operator on the canonical space of the process
controlled under $v^*$.}
\end{itemize}

As explained in \cref{S3.1}, under \hyperlink{H1}{\sf{(H1)}},
$\lambda_{v^*}(c)$ is strictly monotone at $c$.
Therefore, by \cref{T3.1}, we have unicity of the ground state $\Psi$,
and complete verification of optimality results.
In what follows $v^*$ is the control in \hyperlink{H1}{\sf{(H1)}}.

We present the following important convergence result.

\begin{theorem}\label{Tloc}
Grant \hyperlink{H1}{\sf{(H1)}}.
For each $\Phi_0\in\cH_{\kappa}$, $\kappa>0$, the semiflow
$\cS_t[\Phi_0]$ converges to
$\kappa_{0}\Psi\in\Equil$ for some $\kappa_0\in[\kappa^{-1},\kappa]$ as $t\to\infty$.
Moreover, if $A$ is a bounded subset of $C_\Psi(\RR^{d})$, then the only subsets
of $\cH_\kappa\cap A$, with $\kappa>0$, which are invariant under the
semiflow
are the points (singletons) of $\Equil\cap \cH_\kappa\cap A$.
\end{theorem}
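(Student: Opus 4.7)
The plan is to exhibit a strict Lyapunov functional for $\cS_t$ off $\Equil$ using the invariant probability measure $\Tilde\mu_*$ from hypothesis~\hyperlink{H1}{\sf{(H1)}}, and to conclude via a LaSalle-type invariance argument. Setting $u_t\df \cS_t[\Phi_0]/\Psi$, which lies in $[\kappa^{-1},\kappa]$ by \cref{E-inv}, the first step is to use $v^*$ as a (generically suboptimal) control in the HJB form of \cref{E-VI} and invoke the Doob-transform identity \cref{E-identity} together with the eigen-equation for $\Psi$ in \cref{T3.1A} to derive the pointwise subsolution bound
\begin{equation*}
\partial_t u_t(x)\,\le\, \widetilde\Lg_{v^*} u_t(x)\,,\qquad \text{a.e.\ on~} (0,\infty)\times\Rd\,.
\end{equation*}

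I would then show that
\begin{equation*}
\sI(\Phi)\,\df\,\int_{\Rd}\bigl(\Phi/\Psi\bigr)^2\,\D\Tilde\mu_*
\end{equation*}
is a Lyapunov functional on $\cH_\kappa$. Since $\Tilde\mu_*$ is invariant under $\widetilde\Lg_{v^*}$, the carr\'e-du-champ identity
$\int f\,\widetilde\Lg_{v^*}f\,\D\Tilde\mu_*=-\tfrac12\int\langle a\,\grad f,\grad f\rangle\,\D\Tilde\mu_*$
combined with the subsolution bound above and the uniform ellipticity of $a$ yields
\begin{equation*}
\frac{\D}{\D t}\,\sI(\cS_t[\Phi_0])\,\le\, -C_0^{-1}\int_{\Rd}\abs{\grad u_t}^2\,\D\Tilde\mu_*\,.
\end{equation*}
Because $\sI$ is bounded on $\cH_\kappa$, integrating in $t$ gives $\int_0^\infty\int_{\Rd}\abs{\grad u_t}^2\,\D\Tilde\mu_*\,\D t<\infty$, so along some $t_n\uparrow\infty$ we have $\int\abs{\grad u_{t_n}}^2\,\D\Tilde\mu_*\to 0$.

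For the LaSalle step I would invoke interior parabolic regularity for the fully nonlinear HJB in \cref{E-VI} (available under \cref{A3.1}) to conclude that $\{u_t\}_{t\ge1}$ is precompact in $C^{2}_{\mathrm{loc}}(\Rd)$. Extracting a further subsequence, $u_{t_n}\to u_\infty$ in $C^{2}_{\mathrm{loc}}$ with $\grad u_\infty=0$ $\Tilde\mu_*$-a.e.; since $\Tilde\mu_*$ has full support (the ground state diffusion being non-degenerate), $u_\infty\equiv\kappa_0$ for some $\kappa_0\in[\kappa^{-1},\kappa]$. Uniqueness of the limit across subsequences follows from monotone convergence $\sI(\cS_t[\Phi_0])\downarrow\sI_\infty$ together with $\sI(\cS_{t_n}[\Phi_0])\to\kappa_0^2$ by bounded convergence, which forces $\kappa_0=\sqrt{\sI_\infty}$; hence $\cS_t[\Phi_0]\to\kappa_0\Psi$ in $C^{2}_{\mathrm{loc}}$. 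The invariance statement follows directly: if $\cS_t(E)=E$ for all $t\ge0$, then $\sI$ must be constant on $E$ by bi-invariance, so the strict decrease of $\sI$ off $\Equil$ (noting $\grad(\Phi/\Psi)\not\equiv 0$ whenever $\Phi\notin\Equil$) forces $E\subset\Equil\cap\cH_\kappa\cap A$, as claimed.

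The main obstacle is justifying the integration by parts on the unbounded domain $\Rd$: \cref{E-gradient} only supplies the polynomial bound $\abs{\grad u_t}\le C(1+\abs{x}^{\theta})$, while $\Tilde\mu_*$ is merely a probability measure, so one must verify that $\Tilde\mu_*$ has finite $2\theta$-moments — a property obtainable from positive recurrence~\hyperlink{H1}{\sf{(H1)}} combined with the Foster--Lyapunov structure implicit in \cref{EA3.1C,EA3.1D} — or, alternatively, one must regularize by truncating to $B_R$ and controlling the boundary fluxes via \cref{E-gradient} as $R\to\infty$.
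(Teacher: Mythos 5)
Your overall strategy (a Lyapunov functional built from $\Tilde\mu_*$ plus a LaSalle argument, with the same subsolution bound \cref{ETlocA} as the paper) is reasonable, but the central step of your version is not justified under \hyperlink{H1}{\sf{(H1)}}, and the obstacle you flag at the end is in fact the crux rather than a removable technicality. The energy inequality $\tfrac{\D}{\D t}\,\sI(\cS_t[\Phi_0])\le -C_0^{-1}\int\abs{\grad u_t}^2\,\D\Tilde\mu_*$ rests on the identity $\int \widetilde\Lg_{v^*}(u_t^2)\,\D\Tilde\mu_*=0$ (and on differentiating under the integral), which requires $\langle a\grad u_t,\grad u_t\rangle$ and the drift term $\bigl\langle b_{v^*}+a\grad\psi,\grad (u_t^2)\bigr\rangle$ to lie in $L^1(\Tilde\mu_*)$. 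None of this is available: \cref{E-gradient} is an elliptic estimate for positive solutions of the stationary eigen-equation and does not apply to $u_t=\cS_t[\Phi_0]/\Psi$ (no global gradient bound for the parabolic flow is proved anywhere in the paper); \hyperlink{H1}{\sf{(H1)}} asserts only positive recurrence of the ground-state diffusion, so $\Tilde\mu_*$ is not known to have any moments; and the Foster--Lyapunov structure you propose to extract from \cref{EA3.1C,EA3.1D} concerns the original diffusion and the cost, not the ground-state process --- quantitative Lyapunov control of the latter is precisely what \cref{ALyap} provides in the later section, whereas \cref{Tloc} is meant to hold without it. The truncation alternative founders on the same missing gradient bound for the boundary flux. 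This is exactly the difficulty the paper's proof is designed to avoid: it uses the \emph{linear} functional $\Phi\mapsto\Tilde\mu_*(\Phi/\Psi)$, for which the subsolution bound \cref{ETlocA} together with the mere boundedness of $\Phi_{\mathstrut\Psi}$ from \cref{E-inv} yields the supermartingale-type estimate \cref{ETlocB} by It\^o's formula and localization --- no integration by parts against $\Tilde\mu_*$, hence no gradient integrability, is ever needed --- and the limit is then identified on the $\omega$-limit set through the representation \cref{ETlocG}, which forces the minimizer gap $g\equiv0$ and, via ergodicity of the ground-state diffusion, pins the limit to $\kappa_0\Psi$.

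A secondary, more easily repaired gap: even granting your dissipation inequality, the conclusion ``hence $\cS_t[\Phi_0]\to\kappa_0\Psi$ in $C^2_{\mathrm{loc}}$'' follows only along the particular subsequence $t_n$ on which $\int\abs{\grad u_{t_n}}^2\,\D\Tilde\mu_*\to0$; your bounded-convergence argument shows all \emph{constant} limit points coincide, but not that every $\omega$-limit point is constant. To close this you would need the LaSalle ingredient you invoke only for the ``Moreover'' part: $\sI$ is constant on the invariant $\omega$-limit set, so the strict decrease of $\sI$ along trajectories emanating from a non-equilibrium point forces $\omega(\Phi_0)\subset\Equil$. That repair is routine; the integrability issue in the first paragraph is the genuine obstruction to your route.
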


\begin{proof}
Define $\Phi_{\mathstrut\Psi}(t,x)\df \frac{\cS_t[\Phi_0](x)}{\Psi(x)}$.
By \cref{E-identity} applied to \cref{E-VI,E-eigen}, we have
\begin{equation}\label{ETlocA}
\partial_{t} \Phi_{\mathstrut\Psi}(t,x) - \widetilde\Lg_{v^*} \Phi_{\mathstrut\Psi}(t,x)
\,\le\, 0\,.
\end{equation}
Since $\Phi_{\mathstrut\Psi}(t,x)$ is bounded by \cref{E-inv},
we obtain from \cref{ETlocA} that
\begin{equation}\label{ETlocB}
\Phi_{\mathstrut\Psi}(t,x)\,\le\,
\widetilde\Exp^x_*\bigl[\Phi_{\mathstrut\Psi}(\tau,X_\tau)\bigr]\,,
\qquad 0\le \tau \le t\,.
\end{equation}
Integrating \cref{ETlocB} with respect to $\Tilde\mu_*$,
and using the abbreviated notation $\Tilde\mu_*(f)=\int_\Rd f(x)\,\Tilde\mu_*(\D{x})$,
we obtain
\begin{equation*}
\Tilde\mu_*\bigl(\Phi_{\mathstrut\Psi}(t,x)\bigr) \,\le\,
\Tilde\mu_*\bigl(\Phi_{\mathstrut\Psi}(s,x)\bigr)\quad\text{for all\ }t>s\,.
\end{equation*}
Thus, since $t\mapsto\Tilde\mu_*\bigl(\Phi_{\mathstrut\Psi}(t,x)\bigr)$ is nonincreasing,
and $\Phi_{\mathstrut\Psi}(t,x)\in\cH_{\kappa}$ by \cref{E-inv},
it converges to some constant $\kappa_0\in[\kappa^{-1},\kappa]$
as $t\to\infty$.
It is clear that
$\sup_{t>0}\;\norm{\cS_t[\Phi_0]}_{\mathstrut\Psi}
<\norm{\Phi_0}_{\mathstrut\Psi}$ by \cref{E-inv}.
Therefore by the interior estimates of solutions of \cref{E-VI}
(see \cite[Theorem~6.2, p.~457]{Lady}),
$\bigl\{\cS_t[\Phi_0]\,,\;t>0\bigr\}$ is locally
precompact in $C^2(\RR^{d})$.
Hence the $\omega$-limit set of $\Phi_0$ under the semiflow $\cS_t$,
denoted by $\omega(\Phi_0)$, is nonempty, and is a subset of $C^2(\Rd)$.
Note that the convergence of $\Tilde\mu_*\bigl(\Phi_{\mathstrut\Psi}(t,x)\bigr)$
to $\kappa_0$ as $t\to\infty$ implies that
\begin{equation}\label{ETlocC}
\Tilde\mu_*\Bigl(\frac{h}{\Psi}\Bigr)\,=\,\kappa_0\quad
\forall\, h\in\omega(\Phi_0)\,.
\end{equation}

Fix some $h\in\omega(\Phi_0)$,
and define
\begin{equation}\label{ETlocD}
g(t,x)\,\df\, \Lg_{v^*} \cS_t[h](x) + c_{v^*}(x)\,\cS_t[h](x)
- \min_{u\in\Act}\;
\bigl[\Lg_{u} \cS_t[h](x) + c(x,u)\,\cS_t[h](x)\bigr]\,.
\end{equation}
Therefore, by \cref{E-VI,ETlocD}, we have
\begin{equation}\label{ETlocE}
\partial_t\,\overline\Phi(t,x)  \,=\,
\Lg_{v^*} \cS_t[h](x) + c_{v^*}(x)\,\cS_t[h](x)
-g(t,x)
- \lambda_*\,\overline\Phi(t,x)\,,\quad t>0\,,
\end{equation}
which we write as
\begin{equation*}
\partial_t\,\overline\Phi(t,x)  \,=\,
\Lg_{v^*} \cS_t[h](x) +
\biggl(c_{v^*}(x)-\frac{g(t,x)}{\cS_t[h](x)}\biggr)\,\cS_t[h](x)
- \lambda_*\,\overline\Phi(t,x)\,.
\end{equation*}
Using \cref{E-VI,E-identity,ETlocE}, we obtain
\begin{equation}\label{ETlocF}
\partial_{t} \frac{\cS_t[h](x)}{\Psi(x)} - \widetilde\Lg_{v^*} \frac{\cS_t[h](x)}{\Psi(x)}
\,=\, -\frac{g(t,x)}{\Psi(x)}\,.
\end{equation}
Since $\norm{\cS_t[h]}_{\mathstrut\Psi}\le\kappa$ by the positive
invariance of $\cH_\kappa$,
we can apply It\^o's formula to \cref{ETlocF} to obtain
\begin{equation}\label{ETlocG}
\frac{\cS_t[h](x)}{\Psi(x)}\,=\, -\widetilde\Exp^x_*\biggl[
\int_{0}^{t}\frac{g(t-s,X_s)}{\Psi(X_s)}\,\D{s}\biggr]
+ \widetilde\Exp^x_*\biggl[\frac{h(X_t)}{\Psi(X_t)}\biggr]
\qquad \forall\, t>0\,.
\end{equation}
As argued earlier $t\mapsto\Tilde\mu_*\Bigl(\frac{\cS_t[h](x)}{\Psi(x)}\Bigr)$
is constant.
Hence, integrating
\cref{ETlocG} with respect to $\Tilde\mu_*$,
we obtain
\begin{equation*}
\int_{0}^{t}\int_{\RR^{d}} g(t-s,x)\,\frac{1}{\Psi(x)}\,
\Tilde\mu_*(\D{x})\,\D{s}\,=\,0
\quad\Longrightarrow\quad g(t,x)=0\quad (t,x)-\text{a.e.}
\end{equation*}
where we used the fact that  $\Psi(x)>0$.
Therefore, the first term on the right-hand side of \cref{ETlocD} is
identically equal to $0$.
Since $\frac{h}{\Psi}$ is bounded and the diffusion governed by $\Tilde\Lg^*$
is ergodic, the second term on the right hand side of \cref{ETlocG}
converges as $t\to\infty$ to some constant $\kappa_0$ by \cref{ETlocC}.
Thus, again by \cref{ETlocG},
 $\cS_t[h]$ converges to $\kappa_0 \Psi$ along any subsequence
as $t\to\infty$, and the invariance of
the $\omega$-limit set of $\cS_t[\Phi_0]$ implies that
$h = \kappa_0 \Psi$.
This completes the proof.
\end{proof}

\subsection{The relative value iteration}

We modify \cref{E-VI} as follows:
\begin{equation}\label{E-RVI}
\partial_t\,\Phi(t,x)  \,=\, \min_{u\in\Act}\;
\bigl[\Lg_{u} \Phi(t,x) + f(x,u)\,\Phi(t,x)\bigr] - \Phi(t,0)\,\Phi(t,x)\,,
\quad t>0\,,
\end{equation}
with $\Phi(0,x)=\Phi_0(x)$.
Existence of solutions to \cref{E-RVI} is evident from the following
observation:
If $\Phi$ solves \cref{E-RVI} then
\begin{equation}\label{RVI01}
\overline\Phi(t,x) \,=\, \Phi(t,x)\,
\E^{\int_{0}^{t}(\Phi(s,0)-\lambda_*)\,\D{s}}
\end{equation}
solves \cref{E-VI}.
Therefore,
\begin{equation}\label{RVI02}
\frac{\overline\Phi(t,x)}{\Phi(t,x)} \,=\,
\frac{\overline\Phi(t,0)}{\Phi(t,0)}
\qquad \forall\,(t,x)\in(0,\infty)\times\Rd\,,
\end{equation}
so that $\frac{\overline\Phi(t,x)}{\Phi(t,x)}$ does not depend on $x$.
By \cref{RVI01}--\cref{RVI02} we have
\begin{equation*}
\begin{aligned}
\frac{\D}{\D{t}}\,\frac{\Phi(t,x)}{\overline\Phi(t,x)}
&\,=\,-\Phi(t,0)+\lambda_*\nonumber\\[5pt]
&\,=\,-\overline\Phi(t,0)\,\frac{\Phi(t,x)}{\overline\Phi(t,x)}
+\lambda_*\,.
\end{aligned}
\end{equation*}
Thus
\begin{equation}\label{RVI04}
\frac{\Phi(t,x)}{\overline\Phi(t,x)}\,=\,
\E^{-\int_{0}^{t}\overline\Phi(s,0)\,\D{s}}
+ \lambda_* \int_{0}^{t} \E^{-\int_{\tau}^{t}\overline\Phi(s,0)\,\D{s}}\,\D\tau\,.
\end{equation}
It follows by \cref{RVI04} that if
$\overline\Phi(t,0)\to C>0$ as $t\to\infty$ for some positive constant $C$,
then
$\frac{\Phi(t,x)}{\overline\Phi(t,x)}$ converges to a positive constant
as $t\to\infty$.
and thus by \cref{RVI01} we have
\begin{equation*}
\int_{0}^{t}(\Phi(s,0)-\lambda_*)\,\D{s}\;\xrightarrow[t\to\infty]{}\;
\text{constant.}
\end{equation*}
In particular $\Phi(t,0)\to\lambda_*$ as $t\to\infty$.

\subsection{Results under blanket exponential ergodicity}

Under blanket exponential ergodicity, we can remove
the hypotheses in \cref{A3.1}\,(ii).
We keep \cref{A3.1}\,(i), and add an affine growth condition of the form
\begin{equation}\label{E-affine}
\sup_{u\in\Act}\; \langle b(x,u),x\rangle^{+} + \norm{\upsigma(x)}^{2}\,\le\,C_0
\bigl(1 + \abs{x}^{2}\bigr) \qquad \forall\, x\in\RR^{d}\,.
\end{equation}
Concerning the running cost, we assume that it is bounded below,
and, without loss of generality, we normalize it so
that $\inf_{\Rd\times\Act}\,c=0$.

The essential hypothesis in this subsection is the following.

\begin{assumption}\label{ALyap}
We distinguish two cases.
\begin{enumerate}
\item[(i)]
If $c$ is bounded, we assume that there exist
a function $\Lyap \in C^2(\Rd)$ taking
values in $[1,\infty)$, a compact set $\cK\subset\Rd$,
and constants $\widehat{C}$ and $\gamma>\norm{c}_\infty$
which satisfy
\begin{equation}\label{ALyapA}
\Lg_u \Lyap(x)
\,\le\, \widehat{C} \Ind_{\cK}(x)-\gamma \Lyap(x)\qquad\forall\,u\in\Act\,.
\end{equation}

\item[(ii)] If $c$ is not bounded, we assume
that there exist an inf-compact function $F$ and a constant $\beta\in(0,1)$
such that $\beta F-c$ is also inf-compact, and $\Lyap$, $\cK$, and $\widehat{C}$
as in part (i), such that
\begin{equation}\label{ALyapB}
\Lg_u \Lyap(x)
\,\le\, \widehat{C} \Ind_{\cK}(x)-F(x) \Lyap(x)\,.
\end{equation}
\end{enumerate}
\end{assumption}

The reason for differentiating cases (i) and (ii)
in \cref{ALyap} is because if the coefficients $a$ and $b$ are
bounded, it is not, in general, possible to find an inf-compact function $F$
which satisfies \cref{ALyapB}.

Under \cref{ALyap} we obtain a must stronger version of \cref{T3.1}.
Recall the definitions in \cref{E-valueU,E-value,E-lamstr},
and $\Usm^*$ in the beginning
of \cref{S3.1}.
The following theorem is a combination of \cite[Theorems~4.1 and 4.2]{ABS-19},
and the results in \cite[Section~3]{ABS-19}.

\begin{theorem}\label{TLyap}
Grant \cref{A3.1}\,\ttup{i}, \cref{E-affine},
and \cref{ALyap}.
Then $\lambda_*$ is finite, and
the equation
\begin{equation}\label{ETLyapA}
\min_{u\in\Act}\;
\bigl[\Lg_u \Psi(x) + c(x,u)\,\Psi(x)\bigr] \,=\,  \lambda_*\,\Psi(x)
\qquad\forall\,x\in\Rd
\end{equation}
has a unique positive solution $\Psi\in C^2(\RR^{d})$,
$\Psi(0)=1$, and the following hold.
\begin{enumerate}
\item[\ttup a]
$\Lambda^x_*=\Lambda_*=\lambda_*$ for all $x\in\Rd$.

\smallskip
\item[\ttup b]
A stationary Markov control is optimal, if and only if it belongs to $\Usm^*$.

\smallskip
\item[\ttup c]
Part \ttup{c} of \cref{T3.1} holds, and also \cref{E-strep}.

\smallskip
\item[\ttup d]
The ground state diffusion \cref{E-sde*} is exponentially ergodic
under any stationary Markov control.
\end{enumerate}
\end{theorem}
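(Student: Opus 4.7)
The plan is to assemble the statement from \cref{T3.1} together with the stronger dissipativity supplied by \cref{ALyap}. I would proceed in four conceptual steps: (1) verify $\lambda_*<\infty$, (2) produce the ground state $\Psi$ normalized by $\Psi(0)=1$, (3) check the strict-monotonicity hypothesis of \cref{T3.1}\,(e) to obtain uniqueness and items (a), (c), and the forward half of (b), and (4) upgrade ergodicity to hold under \emph{every} stationary Markov control, delivering (d) and the converse in (b).

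For Step~(1), I would test $\cG$ against $\Lyap$: since $\cG\Lyap \le \Lg_u\Lyap + c(\cdot,u)\Lyap$ for every fixed $u\in\Act$, in case~(i) \cref{ALyapA} together with $\gamma>\norm{c}_\infty$ and $\Lyap\ge 1$ gives $\cG\Lyap \le \lambda\Lyap$ for a finite $\lambda$; in case~(ii) \cref{ALyapB} combined with the inf-compactness of $\beta F-c$ achieves the same conclusion outside a slightly larger compact set. Hence $\lambda_*<\infty$ by \cref{E-lamstr}. Step~(2) is then standard: with $\lambda_*$ finite and \cref{A3.1}\,(i) plus \cref{E-affine} in force, the construction of a positive $\Psi\in C^2(\Rd)$ solving \cref{ETLyapA} follows as in Lemmas~2.2--2.3 of \cite{ABis-18} (Dirichlet principal eigenfunctions on $B_R$, normalize, and extract a local $C^2$ limit). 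A multiplicative rescaling enforces $\Psi(0)=1$.

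The heart of the argument is Step~(3). I would show that the ground state diffusion \cref{E-sde*} controlled by any $v^*\in\Usm^*$ is positive recurrent by using $\Lyap/\Psi$ as a Foster--Lyapunov function. The identity \cref{E-identity} applied to the HJB \cref{ETLyapA} together with \cref{ALyapA} yields
\begin{equation*}
\widetilde\Lg_{v^*}\Bigl(\tfrac{\Lyap}{\Psi}\Bigr)
\,=\,\Bigl(\tfrac{\Lg_{v^*}\Lyap}{\Lyap}-\tfrac{\Lg_{v^*}\Psi}{\Psi}\Bigr)\tfrac{\Lyap}{\Psi}
\,\le\,\bigl(\widehat{C}\,\Ind_\cK - \gamma + c_{v^*}-\lambda_*\bigr)\tfrac{\Lyap}{\Psi}\,,
\end{equation*}
and $\gamma>\norm{c}_\infty$ turns the right-hand side into a geometric drift inequality outside a compact set (with $\gamma$ replaced by $F$ in case~(ii)). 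By \cite[Theorem~2.1]{ABS-19} positive recurrence of the ground state diffusion is equivalent to strict monotonicity of $\lambda_{v^*}(c)$ at $c$, and then \cref{T3.1}\,(e) yields uniqueness of $\Psi$; items (a) and (c), and the optimality of every $v^*\in\Usm^*$, are immediate from \cref{T3.1}.

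Step~(4) repeats the Foster--Lyapunov analysis of Step~(3) with $v^*$ replaced by an arbitrary $v\in\Usm$. The additional twist $\langle a\grad\psi,\grad(\cdot)\rangle$ in $\widetilde\Lg_v$ grows at most linearly in $\abs{x}$ by the gradient bound $\abs{\grad\psi}\le\widetilde{C}_0(1+\abs{x})$ of \cref{T3.1}\,(d), and it is absorbed into the blanket dissipativity of \cref{ALyapA}--\cref{ALyapB}, which, crucially, hold uniformly in $u\in\Act$. Standard Foster--Lyapunov theory \cite{MT-III-93} then delivers exponential ergodicity of \cref{E-sde*} under every $v\in\Usm$, proving (d); the converse in (b) is a standard verification argument, since any $v\notin\Usm^*$ produces strict inequality in the stochastic representation of \cref{T3.1}\,(c), contradicting $\Lambda^x_v=\Lambda_*$. \textbf{The main obstacle} lies in Step~(4): the linearly growing twist $a\grad\psi$ must be dominated by the blanket decay rate uniformly over the control set, and this is precisely why case~(ii) of \cref{ALyap} requires the auxiliary inf-compact $F$---the fixed rate $\gamma$ alone cannot absorb a linear perturbation when $c$ is unbounded.
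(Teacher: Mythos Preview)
Your Steps~(1)--(3) are essentially the paper's argument: parts (a)--(c) and uniqueness are quoted from \cite{ABS-19}, and part~(d) is the only item the paper actually argues, via the identity \cref{E-identity} applied to $\Lyap/\Psi$, exactly as you wrote in Step~(3). (One point you omit: the paper also notes that $\Lyap/\Psi$ is bounded away from $0$ on $\Rd$, which is needed for the drift inequality to yield exponential ergodicity; this comes from using $\Lyap$ as a barrier in the construction of $\Psi$.)

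Your Step~(4), however, is miscast. You do \emph{not} need to ``absorb the twist'' via the gradient estimate of \cref{T3.1}\,(d); indeed that estimate is derived under \cref{A3.1}\,(ii), which has been replaced here by \cref{ALyap}, so you cannot invoke it. The point is that the identity \cref{E-identity} already handles arbitrary $v\in\Usm$: since \cref{ETLyapA} is a minimum over $u$, one has $\Lg_{v}\Psi + c_v\Psi \ge \lambda_*\Psi$ for \emph{every} $v$, hence $-\Lg_v\Psi/\Psi \le c_v-\lambda_*$, and your Step~(3) computation gives, verbatim,
\[
\widetilde\Lg_{v}\Bigl(\tfrac{\Lyap}{\Psi}\Bigr)
\,\le\,\bigl(\widehat{C}\,\Lyap^{-1}\Ind_\cK - \gamma + c_{v}-\lambda_*\bigr)\tfrac{\Lyap}{\Psi}
\qquad\forall\,v\in\Usm\,,
\]
which is exactly \cref{ETLyapB} in the paper. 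The reason case~(ii) of \cref{ALyap} needs the inf-compact $F$ has nothing to do with a ``linear perturbation from the twist'': it is simply that when $c$ is unbounded the term $c_v-\gamma$ above is not negative for any fixed $\gamma$, whereas $c_v-F\le -(1-\beta)F+\text{const}$ by the inf-compactness of $\beta F-c$. So your identified ``main obstacle'' is not an obstacle at all, and the route you propose to circumvent it is both unnecessary and unjustified under the present hypotheses.
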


We review part (d) of \cref{TLyap} which is not discussed in \cite{ABS-19}.
First, it is straightforward to show, by using \cref{ALyapA,ALyapB} as a
barrier in the construction of the solution $\Psi$, that
$\frac{\Lyap}{\Psi}$ is bounded away from $0$ on $\Rd$.
Second, note that the nonnegativity of $c$ implies
that $\lambda_*\ge0$,
Thus, from \cref{E-identity,ALyapA,ETLyapA} we obtain
\begin{equation}\label{ETLyapB}
\widetilde\Lg_{u}\Bigl(\frac{\Lyap}{\Psi}\Bigr)(x)\,\le\,
\Bigl(\Lyap^{-1}(x)\widehat{C} \Ind_{\cK}(x)-\lambda_*+
c(x\,,u)-\gamma\Bigr)\frac{\Lyap(x)}{\Psi(x)}
\qquad\forall\,(x,u)\in\Rd\times\Act\,.
\end{equation}
Under \cref{ALyapB},
$\gamma$ gets replaced by $F$ in \cref{ETLyapB}.
It is well known (see \cite[Lemma~2.5.5]{book}) that \cref{ETLyapB} implies that
there exist positive constants $\Tilde{\kappa}_0$ and $\Tilde{\kappa}_1$ such
that
\begin{equation}\label{E-geom1}
\widetilde\Exp^{\mathstrut x}_U \biggl[\frac{\Lyap}{\Psi}(X_{t}) \biggr] \,\le\,
\Tilde{\kappa}_0 + \frac{\Lyap}{\Psi}(x)\, \E^{-\Tilde{\kappa}_1 t}
\qquad \forall\, x\in\RR^{d}\,,\ \forall\, U\in\Uadm\,.
\end{equation}
Let $\widetilde{P}^v_t(x,\D{y})$ denote the transition
probability of the process $\process{X^*}$ in \cref{E-sde*} under
the control $v\in\Usm$, and $\Tilde\mu_v$ its invariant probability measure.
Then, using the argument as in the proof
of \cite[Theorem~2.1\,(b)]{AHP18}, one can show
that \cref{ETLyapB} implies that there exist positive constants
$\gamma_\circ$ and $C_{\gamma_\circ}$, which do not depend on $v\in\Usm$, such that
\begin{equation*}
\bnorm{\widetilde{P}^v_t(x,\cdot\,)-\Tilde\mu_v(\cdot)\,}_{\tv}\,\le\,
C_{\gamma_\circ} \frac{\Lyap(x)}{\Psi(x)}\, \E^{-\gamma_\circ t}\qquad
\forall\, (t,x)\in\RR_+\times\Rd\,,
\end{equation*}
where $\norm{\,\cdot\,}_\tv$ denotes the total variation norm.

\begin{remark}
We want to point out that the proof of \cite[Theorems~4.1 and 4.2]{ABS-19},
shows that under the hypotheses of \cref{TLyap},
the generalized principal eigenvalue $\lambda_v$ defined in \cref{E-lamstrv}
is finite for any $v\in\Usm$, and
there exists a positive $\Psi_v\in\Sobl^{2,p}(\Rd)$, for any $p\ge d$,
which solves
\begin{equation}\label{ERLyapA}
\Lg_v \Psi_v(x) + c_v(x)\,\Psi_v(x) \,=\,  \lambda_v\,\Psi_v(x)
\qquad\text{a.e.\ }\,x\in\Rd\,.
\end{equation}
In addition, $\Psi_v$ is the unique positive solution of \cref{ERLyapA}
in $\Sobl^{2,d}(\Rd)$
up to a positive multiplicative constant, and $\lambda_v=\Lambda^x_v$ for all $x\in\Rd$,
or in other words, the risk-sensitive value equals the generalized principal
eigenvalue of the operator $\Lg_v+ c_v$.
Another important result is given in \cite[Theorem~4.3]{ABS-19} which
shows that, under \cref{ALyap}, $v\mapsto \lambda_v$ is continuous
in the topology of Markov controls (see \cite{Borkar-89} for a definition
of this topology).
\end{remark}

Moving on to the VI algorithm under the assumptions of \cref{TLyap}, note
that by \cref{ETlocB} we have
\begin{equation}\label{EstableA}
\Phi_{\mathstrut\Psi}(t,x)\,=\,\frac{\cS_t[\Phi_0](x)}{\Psi(x)}
\,\le\, \widetilde\Exp^x_*\biggl[\frac{\Phi_0}{\Psi}(X_t)\biggr]
\qquad \forall\, t\ge0\,.
\end{equation}
This gives us an upper bound.
To obtain a lower bound, we use the measurable selector
$\{\Hat{v}_t\}$ in \cref{D3.1} and combine
 \cref{E-VI,E-identity,ETLyapA}, to write
\begin{equation}\label{EstableB}
\partial_{t} \Phi_{\mathstrut\Psi}(t,x) - \widetilde\Lg_{\Hat{v}_t} \Phi_{\mathstrut\Psi}(t,x)
\,\ge\, 0\,.
\end{equation}
With $\Bar\varphi(t,x)\df\log \overline\Phi(t,x)$
and $\varphi_0\df\log \Phi_0$, we deduce from
\cref{EstableB} that
\begin{equation}\label{EstableC}
\Bar\varphi(t,x) \,\ge\, \psi(x) +
\widetilde\Exp^x_{\Hat{v}^{t}}\bigl[\varphi_0(X_t) -\psi(X_t)\bigr]\,,
\end{equation}
where the expectation is under the nonstationary control
$\{\Hat{v}^{t}\}_{t\ge0}\in \widehat\cU(\Phi_0)$ in \cref{D3.1}.

We borrow the following result.
As shown in the proof of \cite[Theorem~4.3]{ABS-19},
under \cref{ALyap}, there exist
positive constants $\Hat\kappa_0$, and $\delta_\circ>1$
such that
$\Lyap \ge \Hat\kappa_0 \Psi^{\,\delta_\circ}$.
This together with \cref{E-geom1} and Jensen's inequality shows that
there exists a constant $\Hat\kappa_1$ such that
\begin{equation}\label{EstableD}
\widetilde\Exp^x_{\Hat{v}^{t}}\bigl[\psi(X_t)\bigr] \,\ge\,
\frac{1}{\delta_\circ-1}\,\log\biggl(
\frac{\Tilde{\kappa}_0}{\Hat\kappa_0}
+ \frac{\Lyap(x)}{\Hat\kappa_0\Psi(x)}\, \E^{-\Tilde{\kappa}_1 t}\biggr)\,.
\end{equation}
Combining \cref{EstableC,EstableD}, we obtain
\begin{equation}\label{EstableE}
\liminf_{t\to\infty}\,\Bar\varphi(t,x) \,\ge\, \psi(x)
+\biggl(\inf_\Rd\,\varphi_0\biggr)
+ \frac{1}{\delta_\circ-1}\,\log\biggl(
\frac{\Tilde{\kappa}_0}{\Hat\kappa_0}\biggr)\,.
\end{equation}
\Cref{EstableA,EstableE} shows that as long as the initial condition $\Phi_0$ is bounded
from below
away from $0$ in $\Rd$, and $\norm{\Phi_0}_\Lyap<\infty$, then any limit point
in $C^2(\Rd)$
of the semiflow $\cS_t[\Phi_0]$ lies in the set $\cH_\kappa$ for some
$\kappa>0$ (recall the definition in \cref{E-cH}).
Using the interior estimates of solutions
and the bounds in \cref{EstableA,EstableC,EstableD}, as in the proof of \cref{Tloc},
it is straightforward to show that the $\omega$-limit set of $\Phi_0$
is a non-empty subset of $C^2(\Rd)$, therefore
also of $\cH_\kappa$.
Hence, following the arguments in \cite[Section~4.2]{RVI} which is
based on convergence of reverse supermartingales, or
the method in \cite{ABor-17}  that has a dynamical systems flavor
(see also \cite[Theorem~3.1]{ISDG-13}),
one can establish the following result.

\begin{theorem}\label{T-stable}
Grant \cref{A3.1}\,\ttup{i}, \cref{E-affine},
and \cref{ALyap}, and suppose that the initial condition
$\Phi_0\in C^2(\Rd)$ is bounded from below away from $0$,
and satisfies $\norm{\Phi_0}_\Lyap<\infty$.
Then there exists a positive constant $\kappa_0=\kappa_0(\Phi_0)$ such that
the value iteration $\overline\Phi(t,x)$ in \cref{E-VI} converges
to $\kappa_0\Psi(x)$ as $t\to\infty$ uniformly on compact sets.
\end{theorem}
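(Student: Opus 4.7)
The plan is to reduce \cref{T-stable} to \cref{Tloc} by identifying the $\omega$-limit set of the trajectory $\{\cS_t[\Phi_0]\}_{t\ge0}$ as a single equilibrium $\kappa_0\Psi\in\Equil$.

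First, I would strengthen the a priori estimates \cref{EstableA,EstableE} into uniform control on $\cS_t[\Phi_0]/\Psi$. Since $\norm{\Phi_0}_{\mathstrut\Lyap}<\infty$ gives $\Phi_0\le C\Lyap$ for some $C>0$, inserting this into \cref{EstableA} and invoking \cref{E-geom1} yields the pointwise upper bound
\begin{equation*}
\frac{\cS_t[\Phi_0](x)}{\Psi(x)} \,\le\, C\Tilde\kappa_0 + C\,\frac{\Lyap(x)}{\Psi(x)}\,\E^{-\Tilde\kappa_1 t}
\qquad\forall\,(t,x)\in\RR_+\times\Rd\,.
\end{equation*}
On the other side, $\inf_{\Rd}\Phi_0>0$ renders $\varphi_0=\log\Phi_0$ bounded below, so \cref{EstableE} delivers a positive constant $c_*$ with $\liminf_{t\to\infty}\cS_t[\Phi_0](x)/\Psi(x)\ge c_*$ for every $x\in\Rd$.

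Next, I would invoke the interior parabolic estimates (\cite[Theorem~6.2, p.~457]{Lady}) already used in the proof of \cref{Tloc}. The upper bound above is locally bounded in $t$, so $\{\cS_t[\Phi_0]:t\ge1\}$ is precompact in $C^2_{\mathrm{loc}}(\Rd)$; consequently $\omega(\Phi_0)\ne\emptyset$. For any $h\in\omega(\Phi_0)$, taking $t_n\to\infty$ along a convergent subsequence, the exponentially decaying term in the upper bound vanishes, which combined with the uniform lower bound gives $c_*\Psi\le h\le C\Tilde\kappa_0\Psi$. Hence $\omega(\Phi_0)\subset\cH_\kappa$ for some $\kappa>0$ and is bounded in $C_\Psi(\Rd)$.

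Finally, $\omega(\Phi_0)$ is invariant under $\cS_t$ by continuous dependence of solutions of \cref{E-VI} on initial data in $C^2_{\mathrm{loc}}$, so the second assertion in \cref{Tloc} forces it to be a singleton in $\Equil\cap\cH_\kappa$, that is, $\omega(\Phi_0)=\{\kappa_0\Psi\}$ for some $\kappa_0\in[\kappa^{-1},\kappa]$. Together with precompactness this gives $\cS_t[\Phi_0]\to\kappa_0\Psi$ in $C^2_{\mathrm{loc}}(\Rd)$, hence uniformly on compact subsets of $\Rd$. The most delicate step is the local-to-$\omega$-limit upgrade: since $\Lyap/\Psi$ may be unbounded, the full trajectory need not live in any $\cH_\kappa$, yet its $C^2_{\mathrm{loc}}$ limit points do, because the exponentially decaying factor vanishes in the limit; the invariant-set characterization in \cref{Tloc} then closes the argument. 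The alternative route signalled by the authors, based on reverse-supermartingale convergence (cf.~\cite[Section~4.2]{RVI}), sidesteps this compactness detour by directly proving $L^1(\Tilde\mu_*)$ convergence of $\cS_t[\Phi_0]/\Psi$ and identifying its limit.
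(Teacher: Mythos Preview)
Your proposal is correct and follows essentially the same route the paper sketches: you reproduce the a priori bounds \cref{EstableA}--\cref{EstableE}, use interior parabolic estimates to obtain precompactness and a nonempty $\omega$-limit set contained in some $\cH_\kappa$, and then close the argument via the invariance characterization in \cref{Tloc}---which is precisely the ``dynamical systems flavor'' alternative the paper cites from \cite{ABor-17}. Your explicit observation that only the $\omega$-limit set (not the full trajectory) need lie in $\cH_\kappa$, because the $\E^{-\Tilde\kappa_1 t}$ term vanishes along $t_n\to\infty$, is the one point the paper leaves implicit.
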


\begin{remark}
When the state space is compact, stronger results can be obtained.
Such a scenario is investigated in \cite{Collatz}, and
Theorem~4.3 in that paper shows in fact that under mild assumptions,
and for a large class of abstract problems,
the convergence is exponential.
\end{remark}

\begin{remark}
It is worth investigating if the global convergence result in \cref{T-stable}
holds under additional assumptions in the near-monotone case.
Suppose that $\theta=1$ in \cref{EA3.1B,EA3.1D} and that $c$ has strictly
quadratic growth.
Then, by \cref{E-gradient}, $c$ satisfies
\begin{equation}\label{ER3.3}
\min_{u\in\Act}\,c(x,u) \,\ge\,
\theta_{1} \psi(x) - \theta_{2}
\qquad\forall\, x\in\RR^{d}\,.
\end{equation}
for some positive constants $\theta_1$ and $\theta_2$.
In the case of the ergodic control problem,
under the structural condition in \cref{ER3.3}, with
$\psi$ replaced by the solution of the HJB equation, global convergence
can be established for the value iteration in continuous
\cite[Theorem~3.2]{RVIM}, as well as in discrete time
\cite[Theorems~6.1--6.2]{ABor-dada} (see also \cite{Ari-open}).
For the risk-sensitive problem, this inequality has to be modfied to account
for the \emph{relative entropy rate} term arising from the logarithmic transformation.
We strengthen \cref{ER3.3} to
\begin{equation}\label{ER3.4}
\min_{u\in\Act}\,c(x,u) -\frac{1}{2}\babs{\upsigma\transp(x)\grad\psi(x)}^2\,\ge\,
\theta_{1} \psi(x) - \theta_{2}
\qquad\forall\, x\in\RR^{d}\,.
\end{equation}
Note that \cref{ER3.4} implies \hyperlink{H1}{\sf{(H1)}}.
We conjecture that under the structural assumption in \cref{ER3.4}
the value iteration $\overline\Phi(t,x)$ in \cref{E-VI},
starting from any initial condition $\Phi_0\in C^2_{\Psi,+}(\Rd)$,
converges to an equilibrium in $\Equil$.
\end{remark}

\section*{Acknowledgments}
The work of Ari Arapostathis was supported in part by 
the Army Research Office through grant W911NF-17-1-001,
in part by the National Science Foundation through grant DMS-1715210,
and in part by the Office of Naval Research through grant N00014-16-1-2956
and was approved for public release under DCN \#43-6054-19.
The work of Vivek Borkar was supported by a J.\ C.\ Bose Fellowship.


\end{document}